\documentclass[11pt]{article}

\usepackage{amsmath,amssymb,amsthm}
\usepackage{bm}
\usepackage{geometry}
\usepackage{hyperref}
\newtheorem{theorem}{Theorem}
\newtheorem{lemma}{Lemma}
\newtheorem{proposition}{Proposition}
\newtheorem{corollary}{Corollary}

\theoremstyle{remark}
\newtheorem{remark}{Remark}

\title{Functional Bias and Tangent-Space Geometry in Variational Inference}
\author{Sean Plummer \\ snplmmr@gmail.com}
\date{\today}

\begin{document}

\maketitle

\begin{abstract}
Variational inference approximates Bayesian posterior distributions by
projecting onto a tractable family of distributions. While most
theoretical analyses evaluate the quality of this approximation using
global divergence measures, many applications rely on specific posterior
summaries such as expectations, variances, or tail probabilities. We develop a geometric framework for analyzing the bias of posterior
functionals under variational approximations. We show that the leading-order bias of a posterior functional
is determined by its component orthogonal to the variational
tangent space induced by the variational family. Functionals aligned with this space incur only
second-order bias. For structured mean-field variational families we characterize the
tangent space explicitly and show that it consists of block-additive
functions of the parameter blocks, while interaction components
determine the leading-order bias. Under standard local asymptotic
normality conditions we further derive explicit asymptotic expansions
for the bias of posterior functionals and show that omitted interaction
directions produce first-order distortion of cross-block dependence
measures. These results provide a geometric explanation for several well-known
properties of mean-field variational inference, including the
systematic distortion of cross-block dependencies.
\end{abstract}

\section{Introduction}

Variational inference (VI) has become a widely used approach for
approximate Bayesian inference in complex statistical models \cite{blei2017variational, jordan1999variational, wainwright2008graphical}.
By restricting inference to a tractable family of distributions and
optimizing a divergence criterion, variational methods provide scalable
approximations to posterior distributions that are otherwise
computationally intractable.

Despite their computational advantages, variational approximations
introduce systematic bias whose structure is not yet fully understood.
Much of the theoretical analysis of variational inference focuses on
global measures of approximation quality, such as
Kullback--Leibler divergence or posterior contraction rates
\cite{wang2019frequentist, yang2020alpha, zhang2020convergence, alquier2016properties, alquier2020concentration}. While these results provide important guarantees about the overall
behavior of the approximation, many practical applications rely on
specific posterior summaries, including expectations, variances,
covariances, or tail probabilities.
This raises a natural question: \emph{which posterior functionals can
be accurately estimated from a variational approximation?}

In this paper we study this question from a geometric perspective.
Viewing the variational solution as the Kullback--Leibler projection of
the posterior onto a restricted variational family, we show that the leading-order bias of a posterior functional is determined by its component orthogonal to the \textit{variational tangent space}
induced by the variational family. Functionals lying within this space incur only second-order bias in the residual, whereas components orthogonal to it generate leading-order error.
This decomposition provides a structural characterization of the
posterior summaries that can be accurately represented by a given
variational family.

The geometric structure underlying this decomposition is closely related to classical results in semiparametric inference, where efficient
estimators are characterized by orthogonal projections onto tangent
spaces \cite{bickel1993efficient,vanderVaart2000}.
In the variational setting the tangent space is determined by the
variational family itself, and the orthogonal complement identifies
the directions of posterior variation that the variational
approximation fails to capture.
This interpretation leads naturally to a notion of
\emph{variational influence functions} governing the leading-order bias
of posterior functionals.

For structured mean-field variational families, the tangent space
admits a simple explicit characterization \cite{jordan1999variational, wainwright2008graphical}.
In particular, we show that the tangent space consists of
block-additive functions of the parameter blocks,
while its orthogonal complement corresponds to interaction terms
coupling multiple blocks.
This leads to a transparent explanation of several well-known
properties of mean-field approximations:
additive summaries of parameter blocks are accurately represented,
whereas summaries sensitive to cross-block dependence exhibit
leading-order bias.

We further investigate the asymptotic implications of this geometric
structure in regular parametric models \cite{vanderVaart2000, kleijn2012bernstein}.
When the posterior distribution admits a local Gaussian approximation,
variational approximations behave as Gaussian projections with
restricted covariance structure.
This yields explicit asymptotic expansions for the bias of posterior
functionals.
In particular, interaction-sensitive functionals such as
cross-covariances exhibit first-order asymptotic distortion under
mean-field approximations, whereas functionals aligned with the
variational tangent space automatically eliminate the leading bias
term.

Taken together, these results show that the geometry of the variational
family determines not only which posterior summaries are accurately
represented, but also how the bias of variational approximations
behaves in large samples.

\paragraph{Contributions.}
The main contributions of this paper are as follows.

\begin{enumerate}
\item We derive a functional bias decomposition for variational
Kullback--Leibler projections that expresses the bias of a posterior
functional in terms of the orthogonal complement of the variational
tangent space.

\item We show that posterior functionals aligned with the tangent space
exhibit only second-order bias under variational approximation.

\item For structured mean-field families, we characterize the tangent
space explicitly and show that it consists of block-additive functions,
with interaction components determining the leading-order bias.

\item We illustrate the framework through examples involving posterior
covariance, variance of linear contrasts, and joint tail probabilities.

\item Under local asymptotic normality conditions we derive explicit
asymptotic bias expansions and show that omitted interaction directions
produce first-order distortion of cross-block dependence measures.
\end{enumerate}

The remainder of the paper is organized as follows.
Section~\ref{sec:projection} introduces the variational projection
framework and establishes a key orthogonality property of the
variational tangent space.
Section~\ref{sec:functional_bias} derives the functional bias
decomposition for variational approximations.
Section~\ref{sec:struct_mf} characterizes the tangent space for
structured mean-field families, and Section~\ref{sec:two_block}
develops an explicit two-block decomposition.
Section~\ref{sec:examples} presents examples illustrating the
implications of the theory.
Section~\ref{sec:asymptotics} studies the local asymptotic behavior of
the bias decomposition, and Section~\ref{sec:discussion} concludes.

\section{Variational Projection Framework} \label{sec:projection}

Let $(\Theta,\mathcal{A},\pi_0)$ be a measurable space equipped with a
reference probability measure $\pi_0$. We assume that both the posterior
distribution and the variational family are absolutely continuous with
respect to $\pi_0$.

\subsection{Posterior and variational family}

Let the posterior distribution be
\[
\pi(d\theta) = p(\theta)\,\pi_0(d\theta),
\]
where $p(\theta)>0$ denotes the posterior density with respect to
$\pi_0$. Consider a parametric variational family
\[
\mathcal{Q}
=
\{q_\lambda(d\theta) = r_\lambda(\theta)\,\pi_0(d\theta)
:\lambda\in\Lambda\subset\mathbb{R}^k\}.
\]
Variational inference selects an approximation by minimizing the
Kullback--Leibler divergence \cite{blei2017variational,wainwright2008graphical}
\[
F(\lambda)
=
\mathrm{KL}(q_\lambda\|\pi)
=
\int r_\lambda(\theta)
\log \left [ \frac{r_\lambda(\theta)}{p(\theta)} \right ]
\,\pi_0(d\theta).
\]
Assume that a minimizer $\lambda^*\in\Lambda$ exists and define
$q^* = q_{\lambda^*}$. Throughout the paper we interpret $q^*$ as the
\emph{KL projection} of the posterior distribution onto the variational
family. Expectations with respect to $q^*$ will be taken in the Hilbert space
$L^2(q^*) = L^2(\Theta,\mathcal A,q^*)$.

\subsection{Residual representation}

A convenient quantity for comparing $q^*$ and $\pi$ is the log-density
residual
\[
\Delta(\theta)
=
\log\frac{q^*(\theta)}{\pi(\theta)}.
\]
The residual $\Delta$ measures the log-density discrepancy between the
variational approximation and the posterior. Note that $\Delta(\theta)$ is positive where the variational
approximation overestimates the posterior density and negative
where it underestimates it. Equivalently,
\[
\frac{\pi(\theta)}{q^*(\theta)}
=
e^{-\Delta(\theta)}.
\]
This representation yields the identity
\[
E_\pi[g]
=
E_{q^*}\!\left[g\,e^{-\Delta}\right]
\]
for any $g\in L^1(\pi)$. This relationship will allow us to relate
expectations under $\pi$ and $q^*$. 

\subsection{Tangent space}

The geometry of the variational family can be described using its
tangent space at $q^*$. Let
\[
s_j(\theta)
=
\partial_{\lambda_j}\log r_\lambda(\theta)\big|_{\lambda=\lambda^*},
\qquad j=1,\dots,k,
\]
denote the score functions of the variational family. The tangent space of the variational family at $q^*$ is defined as
\[
T_{q^*}\mathcal Q
=
\overline{\mathrm{span}\{s_1,\dots,s_k\}}^{L^2(q^*)}.
\]
Intuitively, $T_{q^*}\mathcal Q$ describes the directions in which the
variational distribution can be locally perturbed while remaining within
the variational family.

\subsection{Score orthogonality}

The KL optimality condition for $\lambda^*$ implies that the residual
$\Delta$ is orthogonal to the tangent space.

\begin{lemma}[Tangent-space orthogonality]
\label{lem:tangent_orthogonal}
For all $h \in T_{q^*}\mathcal Q$,
\[
E_{q^*}[h\,\Delta] = 0 .
\]
\end{lemma}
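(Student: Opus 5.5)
The plan is to differentiate the objective $F(\lambda)$ at the interior minimizer $\lambda^*$, use the first-order condition $\nabla F(\lambda^*)=0$ to get orthogonality of $\Delta$ against each score $s_j$, and then pass to the closed span. Throughout I assume, as implicit in the definition of $T_{q^*}\mathcal Q$, that $\lambda^*$ lies in the interior of $\Lambda$. I also assume enough regularity to differentiate under the integral sign: $\lambda\mapsto r_\lambda(\theta)$ is differentiable near $\lambda^*$, the derivatives are dominated by a $\pi_0$-integrable envelope, and $s_j\in L^2(q^*)$, $\Delta\in L^2(q^*)$.

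\textbf{Step 1 (first variation).} Write $F(\lambda)=\int r_\lambda\log r_\lambda\,d\pi_0-\int r_\lambda\log p\,d\pi_0$. Differentiating under the integral and using $\partial_{\lambda_j} r_\lambda = r_\lambda\,\partial_{\lambda_j}\log r_\lambda$ gives
\[
\partial_{\lambda_j}F(\lambda)=\int \partial_{\lambda_j}r_\lambda\Big(\log\frac{r_\lambda}{p}+1\Big)\,d\pi_0 .
\]
At $\lambda=\lambda^*$ we have $\log(r_{\lambda^*}/p)=\Delta$, so
\[
\partial_{\lambda_j}F(\lambda^*)=E_{q^*}[s_j\Delta]+E_{q^*}[s_j].
\]

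\textbf{Step 2 (scores have mean zero).} Since $\int r_\lambda\,d\pi_0=1$ for all $\lambda$, differentiating (again under the integral) gives $E_{q^*}[s_j]=0$. Optimality of the interior point $\lambda^*$ gives $\partial_{\lambda_j}F(\lambda^*)=0$. Together these yield $E_{q^*}[s_j\Delta]=0$ for each $j$.

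\textbf{Step 3 (extension to the closure).} By linearity, $E_{q^*}[h\Delta]=0$ for every $h\in\mathrm{span}\{s_1,\dots,s_k\}$. Since $\Delta\in L^2(q^*)$, the map $h\mapsto E_{q^*}[h\Delta]=\langle h,\Delta\rangle_{L^2(q^*)}$ is a continuous linear functional by Cauchy--Schwarz. It therefore vanishes on the $L^2(q^*)$-closure, which is $T_{q^*}\mathcal Q$. (With $k$ finite the span is already closed, but the continuity argument covers the definition as written.)

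The main obstacle is Step 1: justifying differentiation under the integral. The $\log(r_\lambda/p)$ factor can be unbounded, so the dominating envelope must control $|\partial_\lambda r_\lambda|\,(|\log r_\lambda|+|\log p|+1)$ uniformly in a neighborhood of $\lambda^*$. I would state this as a standing regularity assumption, e.g.\ local uniform integrability of $\partial_\lambda r_\lambda\cdot\log(r_\lambda/p)$. An alternative is to compute the directional derivative $\lim_{t\to0}[F(\lambda^*+te_j)-F(\lambda^*)]/t$ via the mean value theorem combined with dominated convergence, which requires the same envelope.
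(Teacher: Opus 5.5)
Your proposal is correct and follows the paper's proof essentially step for step. Both use the first-order condition $E_{q^*}[s_j(\Delta+1)]=0$ at the interior minimizer and the mean-zero property of the scores to get $E_{q^*}[s_j\Delta]=0$, then pass to the closed span by linearity and continuity of the inner product. The regularity you state explicitly (differentiation under the integral sign, and $\Delta\in L^2(q^*)$ for the continuity step) is left implicit in the paper, so making it explicit is an improvement rather than a deviation.
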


\begin{proof}
First note that differentiation of the KL objective yields
\[
\partial_{\lambda_j}F(\lambda^*)
=
E_{q^*}[s_j(\Delta+1)].
\]
Since $\lambda^*$ is an interior minimizer,
$\partial_{\lambda_j}F(\lambda^*)=0$.
Normalization of $q_\lambda$ implies
$E_{q^*}[s_j]=0$, and therefore
\[
E_{q^*}[s_j\,\Delta]=0
\]
for each $j=1,\dots,k$. Since the score functions span $T_{q^*}(\mathcal Q)$ in
$L^2(q^*)$, the result follows by linearity and continuity of the inner product.
\end{proof}

Lemma~\ref{lem:tangent_orthogonal} shows that the residual $\Delta$ lies
in the orthogonal complement of the variational tangent space. This
geometric property will drive the bias decomposition developed in the
next section.


\section{Functional Bias Decomposition}\label{sec:functional_bias}

We now relate posterior expectations to expectations under the
variational approximation. The key observation is that the residual
\[
\Delta(\theta)=\log\frac{q^*(\theta)}{\pi(\theta)}
\]
provides a convenient change-of-measure representation.

\paragraph{Assumption.}
There exists $\eta>0$ such that
\[
E_{q^*}[e^{\eta |\Delta|}] < \infty .
\]
This condition holds whenever the density ratio $\pi/q*$ has subexponential tails.
\subsection{Change-of-measure expansion}

The following lemma expresses the difference between posterior and
variational expectations in terms of the residual $\Delta$.

\begin{lemma}[Change-of-measure expansion]
\label{lem:change_of_measure}
Let $g\in L^2(q^*)$ and define $\rho(x) = e^{-x} - 1 + x$. Then
\[
E_\pi[g] - E_{q^*}[g]
=
- E_{q^*}[g\Delta]
+
E_{q^*}[g\,\rho(\Delta)] .
\]
Moreover,
\[
\left|E_{q^*}[g\,\rho(\Delta)]\right|
\le
C
\|g - E_{q^*}[g]\|_{L^2(q^*)}
\|\Delta\|_{L^2(q^*)}^2
\]
for a constant $C$ depending only on the integrability properties of
$\Delta$.
\end{lemma}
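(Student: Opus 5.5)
The identity is pure algebra, and the bound comes from Cauchy--Schwarz plus a Taylor bound on $\rho$. One point needs care first: as literally stated, the bound fails for constant $g$, so the plan below proves it for the centered functional.

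\textbf{Step 1: the identity.} I would start from the change-of-measure identity $E_\pi[g]=E_{q^*}[g\,e^{-\Delta}]$ of Section~\ref{sec:projection}. By definition of $\rho$ we have $e^{-\Delta}=1-\Delta+\rho(\Delta)$, hence
\[
E_\pi[g]=E_{q^*}[g]-E_{q^*}[g\Delta]+E_{q^*}[g\,\rho(\Delta)].
\]
Each term must be integrable. $E_{q^*}[g\Delta]$ is finite by Cauchy--Schwarz, because the exponential moment assumption gives $\Delta\in L^p(q^*)$ for all $p$. For $E_{q^*}[g\rho(\Delta)]$ I use $|\rho(x)|\le \tfrac12 x^2 e^{|x|}$ (Taylor with Lagrange remainder) together with Cauchy--Schwarz, which requires $E_{q^*}[\Delta^4e^{2|\Delta|}]<\infty$.

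\textbf{Step 2: why centering is needed.} Since $E_{q^*}[e^{-\Delta}]=E_\pi[1]=1$, we get $E_{q^*}[\rho(\Delta)]=E_{q^*}[\Delta]=\mathrm{KL}(q^*\|\pi)$. So for $g\equiv c$ the remainder equals $c\,\mathrm{KL}(q^*\|\pi)$, which is nonzero whenever $q^*\neq\pi$, while the right-hand side of the claimed bound is $0$. The bound is therefore false for non-centered $g$ and must be read for $\tilde g=g-E_{q^*}[g]$. This loses nothing: the left side $E_\pi[g]-E_{q^*}[g]$ is invariant under adding constants to $g$. Applying Step~1 to $\tilde g$ gives
\[
E_\pi[g]-E_{q^*}[g]=-E_{q^*}[\tilde g\Delta]+E_{q^*}[\tilde g\,\rho(\Delta)],
\]
and I would state the lemma in this centered form (equivalently, $E_{q^*}[g\Delta]$ is read as a covariance).

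\textbf{Step 3: the bound.} Cauchy--Schwarz gives
\[
|E_{q^*}[\tilde g\rho(\Delta)]|\le \|\tilde g\|_{L^2(q^*)}\,\|\rho(\Delta)\|_{L^2(q^*)}\le \tfrac12\|\tilde g\|_{L^2(q^*)}\,\big(E_{q^*}[\Delta^4e^{2|\Delta|}]\big)^{1/2}.
\]
It remains to compare $(E_{q^*}[\Delta^4 e^{2|\Delta|}])^{1/2}$ with $\|\Delta\|_{L^2(q^*)}^2$. The natural choice is to define
\[
C=\tfrac12\,\frac{\|\Delta^2e^{|\Delta|}\|_{L^2(q^*)}}{\|\Delta\|_{L^2(q^*)}^2},
\]
a ratio depending only on the distribution of $\Delta$ under $q^*$, which matches ``integrability properties of $\Delta$.'' It is finite provided $E_{q^*}[e^{\eta|\Delta|}]<\infty$ with $\eta>2$; Hölder then handles the $\Delta^4$ factor. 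Under the stated assumption with a small $\eta$, one instead truncates at $|\Delta|\le M$ and controls the tail by Hölder, which degrades the exponent on $\|\Delta\|_{L^2(q^*)}$ slightly.

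\textbf{Main obstacle.} This last comparison is the hard step. A genuinely uniform $C$ needs something like hypercontractivity of $\Delta$ (fourth moment controlled by the squared second moment) or boundedness of $\Delta$. If the weighted constant is deemed unacceptable, I would try first to impose $\|\Delta\|_\infty<\infty$, which gives $C=\tfrac12 e^{\|\Delta\|_\infty}\,\|\Delta\|_{L^4(q^*)}^2/\|\Delta\|_{L^2(q^*)}^2$.
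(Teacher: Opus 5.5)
Your route is the paper's: the identity from $e^{-x}=1-x+\rho(x)$, then the Taylor bound $|\rho(x)|\le\frac12x^2e^{|x|}$ combined with exponential integrability. The paper's proof says no more than that.

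Your Step~2 is correct, and it exposes a genuine defect in the statement that the paper's proof passes over. Because $E_{q^*}[e^{-\Delta}]=1$, one has $E_{q^*}[\rho(\Delta)]=\mathrm{KL}(q^*\|\pi)$, which is strictly positive unless $q^*=\pi$. So for constant $g\equiv c\neq 0$ the remainder is nonzero while the claimed bound is zero.

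Bounding $E_{q^*}[g\rho(\Delta)]$ directly, as the paper does, only gives control through the uncentered $\|g\|_{L^2(q^*)}$. The piece $E_{q^*}[g]\,E_{q^*}[\rho(\Delta)]$ is invisible to $\|g-E_{q^*}[g]\|_{L^2(q^*)}$. Recentering the identity, so that the linear term becomes $\mathrm{Cov}_{q^*}(g,\Delta)$, is the right repair. It does not affect Theorem~\ref{thm:variational_bias}: there the remainder constant may depend on $g$, so the uncentered bound $\|g\|_{L^2(q^*)}\|\rho(\Delta)\|_{L^2(q^*)}$ already suffices.

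Two side remarks in your Step~3 need correcting.

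First, the truncation fallback for small $\eta$ does not work. On $\{\Delta<-M\}$ one has $\rho(\Delta)\approx e^{-\Delta}=\pi/q^*$, so the tail term is essentially $E_\pi[\tilde g;\,\Delta<-M]$. Moreover, $L^2(q^*)\subset L^1(\pi)$ holds if and only if $E_{q^*}[e^{-2\Delta}]=1+\chi^2(\pi\|q^*)<\infty$. With only $E_{q^*}[e^{\eta|\Delta|}]<\infty$ for some $\eta<2$, this can fail. In that case $E_\pi[g]$ is undefined for some $g\in L^2(q^*)$, and no truncation can save either the identity or the bound. The hypothesis itself must be strengthened or $g$ restricted, e.g.\ to bounded functions. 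Your condition $E_{q^*}[\Delta^4e^{2|\Delta|}]<\infty$, or $\eta>2$, is a sufficient strengthening.

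Second, boundedness of $\Delta$ alone does not give a uniform second-order constant. From $|\Delta|\le B$ you only get $\|\Delta\|_{L^4(q^*)}^2\le B\|\Delta\|_{L^2(q^*)}$, which is a first-order bound. The ratio $\|\Delta\|_{L^4(q^*)}^2/\|\Delta\|_{L^2(q^*)}^2$ is unbounded for spiky $\Delta$.

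In fact the sharp constant over centered $\tilde g$ is $\|\rho(\Delta)-E_{q^*}[\rho(\Delta)]\|_{L^2(q^*)}/\|\Delta\|_{L^2(q^*)}^2$, attained by taking $\tilde g$ proportional to the centered $\rho(\Delta)$. So a ratio-type $C$ like yours is all that either argument can deliver. The phrase ``depending only on the integrability properties of $\Delta$'' has real content only under an additional hypercontractivity-type assumption, as you say.
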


\begin{proof}
Since $\pi(\theta)/q^*(\theta) = e^{-\Delta(\theta)}$, we have
\[
E_\pi[g] = E_{q^*}[g\,e^{-\Delta}] .
\]
Using the identity $e^{-x}=1-x+\rho(x)$ yields
\[
E_\pi[g]-E_{q^*}[g]
=
- E_{q^*}[g\Delta]
+
E_{q^*}[g\rho(\Delta)] .
\]
The remainder bound follows from the inequality
\[
|\rho(x)| \le \tfrac12 x^2 e^{|x|},
\]
which follows from the Taylor remainder formula, and the assumed exponential integrability of $\Delta$.
\end{proof}

\subsection{Geometric bias decomposition}

We now combine the change-of-measure expansion with the tangent-space
orthogonality property established in Section~\ref{sec:projection}.

\begin{theorem}[Variational projection identity]
\label{thm:variational_bias}
Let $q^*$ denote the KL projection of $\pi$ onto a variational family
$\mathcal Q$. Let $T_{q^*}\mathcal Q$ denote the tangent space of
$\mathcal Q$ at $q^*$ and $\Delta = \log(q^*/\pi)$. For any fixed $g\in L^2(q^*)$,  write the orthogonal decomposition
\[
g = g_\parallel + g_\perp \quad \text{with} \quad g_\parallel \in T_{q^*}\mathcal Q,
\qquad
g_\perp \perp T_{q^*}\mathcal Q .
\]
Then
\[
E_\pi[g] - E_{q^*}[g]
=
- \langle g_\perp , \Delta \rangle_{L^2(q^*)}
+
O\!\left(\|\Delta\|_{L^2(q^*)}^2\right)
\]
with implicit constant depending on $g$. 
\end{theorem}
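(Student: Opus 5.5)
The plan is to combine Lemma~\ref{lem:change_of_measure} with Lemma~\ref{lem:tangent_orthogonal}.

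First, apply Lemma~\ref{lem:change_of_measure} to $g$. This gives
\[
E_\pi[g]-E_{q^*}[g] = -E_{q^*}[g\Delta] + E_{q^*}[g\,\rho(\Delta)].
\]

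Second, insert the orthogonal decomposition $g=g_\parallel+g_\perp$ into the linear term. By linearity,
\[
E_{q^*}[g\Delta]=E_{q^*}[g_\parallel\Delta]+E_{q^*}[g_\perp\Delta].
\]
Since $g_\parallel\in T_{q^*}\mathcal Q$, Lemma~\ref{lem:tangent_orthogonal} gives $E_{q^*}[g_\parallel\Delta]=0$. The linear term therefore reduces to
\[
-\langle g_\perp,\Delta\rangle_{L^2(q^*)}.
\]
This is well defined once we know $\Delta\in L^2(q^*)$. That follows from the standing assumption $E_{q^*}[e^{\eta|\Delta|}]<\infty$, since exponential integrability implies finiteness of all polynomial moments.

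Third, control the remainder. Lemma~\ref{lem:change_of_measure} gives
\[
\left|E_{q^*}[g\rho(\Delta)]\right|\le C\,\|g-E_{q^*}[g]\|_{L^2(q^*)}\,\|\Delta\|_{L^2(q^*)}^2 .
\]
Since $g$ is fixed, the prefactor $C\|g-E_{q^*}g\|$ is a finite constant depending on $g$ (and on the integrability of $\Delta$). This is exactly the $O(\|\Delta\|^2_{L^2(q^*)})$ term with a $g$-dependent implicit constant, and the identity follows.

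The main obstacle is the remainder bound itself, which we take as given from Lemma~\ref{lem:change_of_measure}. If one wanted to verify it directly, the natural route would be as follows:
\begin{itemize}
\item use $|\rho(x)|\le\tfrac12x^2e^{|x|}$;
\item apply H\"older's inequality to bound $E_{q^*}[|g|\,\Delta^2e^{|\Delta|}]$ by $\|g\|_{L^2}$ times a higher moment of $\Delta$ weighted by $e^{|\Delta|}$.
\end{itemize}
Turning that into a genuine $\|\Delta\|_{L^2}^2$ factor requires the constant to absorb ratios of higher moments to the second moment. This is why the statement only asserts a $g$-dependent implicit constant rather than a sharp bound. If $\eta<2$ causes trouble, I would first restrict attention to bounded $g$ and then extend by truncation.

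A minor point is the centering. The lemma bounds the remainder in terms of $g-E_{q^*}g$ rather than $g$. The constant function $1$ lies in the closed span of the scores only in special cases, but this does not matter here: the remainder bound is used as stated and the leading term is already expressed through $g_\perp$. Note also that $E_{q^*}[\Delta]$ need not vanish, so constants may appear in $g_\perp$; the decomposition in the theorem handles this automatically.
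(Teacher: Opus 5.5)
Your proposal is correct and follows the paper's proof essentially step for step: expand via Lemma~\ref{lem:change_of_measure}, split $g=g_\parallel+g_\perp$, remove $E_{q^*}[g_\parallel\Delta]$ with Lemma~\ref{lem:tangent_orthogonal}, and absorb the remainder into a $g$-dependent $O(\|\Delta\|_{L^2(q^*)}^2)$ term. One caution, which applies equally to the paper: you dismiss the centering issue too quickly, because for $g\equiv 1$ (which is always orthogonal to $T_{q^*}\mathcal Q$, since the scores are centered, not just in special cases) the remainder is $E_{q^*}[\rho(\Delta)]=E_{q^*}[\Delta]=\mathrm{KL}(q^*\|\pi)>0$ whenever $q^*\neq\pi$, while $\|g-E_{q^*}g\|_{L^2(q^*)}=0$, so the remainder step should use the uncentered norm $\|g\|_{L^2(q^*)}$ together with enough integrability for $E_\pi[g]$ to exist for every $g\in L^2(q^*)$ (e.g.\ $E_{q^*}[e^{2|\Delta|}]<\infty$; truncating $g$ cannot substitute for this), after which the stated $g$-dependent $O(\|\Delta\|_{L^2(q^*)}^2)$ conclusion goes through.
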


\begin{proof}
From Lemma~\ref{lem:change_of_measure} we obtain
\[
E_\pi[g] - E_{q^*}[g]
=
- E_{q^*}[g\Delta]
+
E_{q^*}[g\rho(\Delta)] .
\]
Write the orthogonal decomposition
\[
g = g_\parallel + g_\perp,
\]
where $g_\parallel \in T_{q^*}\mathcal Q$ and $g_\perp \perp T_{q^*}\mathcal Q$. By Lemma~\ref{lem:tangent_orthogonal}, the residual $\Delta$ is orthogonal to the tangent space,
hence
\[
E_{q^*}[g_\parallel \Delta] = 0.
\]
Therefore
\[
E_{q^*}[g\Delta] = E_{q^*}[g_\perp \Delta].
\]
Substituting this identity into the expansion from Lemma~\ref{lem:change_of_measure} yields
\[
E_\pi[g] - E_{q^*}[g]
=
- E_{q^*}[g_\perp \Delta]
+
E_{q^*}[g\rho(\Delta)].
\]
The remainder term is bounded by
\[
|E_{q^*}[g\rho(\Delta)]|
\le
C\|g - E_{q^*}[g]\|_{L^2(q^*)}\|\Delta\|_{L^2(q^*)}^2
\]
from Lemma~\ref{lem:change_of_measure}, which yields the stated result.
\end{proof}

\begin{remark}[Projection identity]
Theorem 1 shows that the leading-order bias of a posterior functional
is determined by the component of the functional orthogonal to the
variational tangent space. In particular,

\[
E_\pi[g] - E_{q^*}[g]
=
- \langle (I-P_T)g, \Delta \rangle_{L^2(q^*)}
+ O(\|\Delta\|^2_{L^2(q^*)}),
\]
with implicit constant depending on $g$. Above $P_T$ denotes the projection into $T_{q^*}\mathcal{Q}$. Thus only the component of the functional outside the tangent space
contributes to the first-order bias. This quantity plays the role of a variational influence component,
analogous to the influence functions appearing in classical semiparametric theory.
\end{remark}

\begin{corollary}[Unbiased functional class]
For fixed $g \in T_{q^*}\mathcal Q$,
\[
E_\pi[g] - E_{q^*}[g]
=
O\!\left(\|\Delta\|_{L^2(q^*)}^2\right).
\]
\end{corollary}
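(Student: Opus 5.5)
The corollary is the special case $g_\perp = 0$ of Theorem~\ref{thm:variational_bias}. I would still write the argument out directly, to make clear which ingredients are used. Take $g \in T_{q^*}\mathcal Q$. First check that $g$ qualifies for the theorem: $T_{q^*}\mathcal Q$ is defined as a closed subspace of $L^2(q^*)$, so $g\in L^2(q^*)$. Its orthogonal decomposition is trivial, with $g_\parallel = g$ and $g_\perp = 0$, and this decomposition is unique because $T_{q^*}\mathcal Q$ is closed.

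Next, apply Lemma~\ref{lem:change_of_measure} to $g$. This gives
\[
E_\pi[g]-E_{q^*}[g] = -E_{q^*}[g\Delta] + E_{q^*}[g\,\rho(\Delta)].
\]
By Lemma~\ref{lem:tangent_orthogonal}, applied with $h=g$, the first term is $E_{q^*}[g\Delta]=0$. This is the only place where membership in the tangent space is used. It relies on $\lambda^*$ being an interior stationary point and on the continuity argument that extends orthogonality from the scores $s_j$ to their $L^2(q^*)$-closure.

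It remains to bound the term $E_{q^*}[g\rho(\Delta)]$. The remainder bound in Lemma~\ref{lem:change_of_measure} gives
\[
|E_{q^*}[g\rho(\Delta)]| \le C\,\|g-E_{q^*}[g]\|_{L^2(q^*)}\,\|\Delta\|^2_{L^2(q^*)}.
\]
Here $C$ depends only on the exponential integrability assumption on $\Delta$. Since $g$ is fixed, the factor $\|g-E_{q^*}g\|_{L^2(q^*)}\le \|g\|_{L^2(q^*)}$ is a constant, and this yields $O(\|\Delta\|^2_{L^2(q^*)})$ with the implicit constant depending on $g$.

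\textbf{Main obstacle.} The only delicate point is this remainder step, which is inherited from Lemma~\ref{lem:change_of_measure}. Passing from $|\rho(x)|\le \tfrac12 x^2 e^{|x|}$ to a bound involving $\|\Delta\|^2_{L^2}$ alone, rather than higher moments, requires the exponential integrability assumption, for example through Hölder's inequality. Since that lemma is taken as given, I would cite it and not re-derive it. The corollary then follows immediately.
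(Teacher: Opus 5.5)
Your proposal is correct and follows the paper's route: the corollary is Theorem~\ref{thm:variational_bias} with $g_\perp=0$, and that theorem is proved by the same combination of Lemma~\ref{lem:change_of_measure} and Lemma~\ref{lem:tangent_orthogonal} that you write out. One small caveat on your side remark: H\"older's inequality plus exponential integrability controls $E_{q^*}[g\rho(\Delta)]$ only through higher moments of $\Delta$ (an $L^p(q^*)$ norm with $p>4$), not through $\|\Delta\|_{L^2(q^*)}^2$, so the remainder bound rests entirely on Lemma~\ref{lem:change_of_measure} as stated (or on boundedness of $g$ and $\Delta$), which is how you use it anyway.
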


\subsection{Convex Functionals}
For convex functionals, the same geometric mechanism extends through
subgradients. Let $T$ be a convex functional defined on a convex subset of $L^2(q^*)$ containing $\pi$ and
$q^*$. Recall that
$\psi\in\partial T(\pi)$ means
\[
T(\nu)-T(\pi)\ge \langle \psi,\nu-\pi\rangle
\,\text{ for all }\nu \text{ in the domain of } T,
\]
where $\langle \psi,\nu-\pi\rangle$ denotes the natural dual pairing.
Thus, whenever $T$ admits a first-order expansion at $\pi$ represented
by a subgradient $\psi$, the leading variation of $T$ is governed by
the linear functional $\nu\mapsto \langle \psi,\nu-\pi\rangle$. If this
subgradient lies in the variational tangent space, then the same
orthogonality argument as in Theorem~\ref{thm:variational_bias} removes the first-order term.

We continue with the convention that $\pi$ and $q^*$ are identified with their densities relative to the common dominating measure $\pi_0$.

\begin{corollary}[Convex functionals with subgradient representation]
Let $T$ be a convex functional on a convex subset
of $L^2(q^*)$ containing $\pi$ and $q^*$. Suppose there exists
$\psi\in\partial T(\pi)$ such that $\psi\in T_{q^*}\mathcal Q\cap
L^2(q^*)$, and suppose that $T$ admits a first-order expansion at $\pi$
of the form
\[
T(q^*)-T(\pi)=\langle \psi,q^*-\pi\rangle_{L^2(q^*)} + r(q^*,\pi),
\]
with
\[
r(q^*,\pi)=O\!\left(\|\Delta\|_{L^2(q^*)}^2\right).
\]
Then
\[
T(q^*)-T(\pi)=O\!\left(\|\Delta\|_{L^2(q^*)}^2\right).
\]
\end{corollary}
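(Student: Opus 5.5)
The plan is to show that the linear term $\langle \psi, q^*-\pi\rangle$ in the assumed expansion is itself $O(\|\Delta\|^2_{L^2(q^*)})$. Adding the remainder $r(q^*,\pi)$, which is already of that order by hypothesis, then gives the claim. The key step is to interpret the pairing correctly. Because $\pi$ and $q^*$ are identified with their densities relative to $\pi_0$, the natural pairing of $\psi$ with the signed measure $q^*-\pi$ is
\[
\langle \psi, q^*-\pi\rangle = \int \psi\,(q^*-\pi)\,d\pi_0 = E_{q^*}[\psi]-E_\pi[\psi].
\]
I will use this reading, i.e.\ integrate $\psi$ against the density difference. If the inner product subscript $L^2(q^*)$ is read literally with weight $q^*$, an extra factor $q^*$ appears and the argument breaks. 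So the first step is to fix this convention explicitly, consistent with the remark preceding the corollary.

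With this reading, the linear term is exactly minus the bias of the functional $g=\psi$. By hypothesis $\psi\in T_{q^*}\mathcal Q\cap L^2(q^*)$. The corollary on the unbiased functional class (equivalently, Theorem~\ref{thm:variational_bias} with $g_\perp=0$) therefore gives
\[
E_\pi[\psi]-E_{q^*}[\psi]=O\!\left(\|\Delta\|^2_{L^2(q^*)}\right).
\]
Concretely, Lemma~\ref{lem:change_of_measure} gives
\[
E_\pi[\psi]-E_{q^*}[\psi]=-E_{q^*}[\psi\Delta]+E_{q^*}[\psi\rho(\Delta)].
\]
The first term vanishes by Lemma~\ref{lem:tangent_orthogonal}. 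The second is bounded by $C\|\psi-E_{q^*}\psi\|_{L^2(q^*)}\|\Delta\|^2_{L^2(q^*)}$ under the exponential integrability assumption on $\Delta$. Hence $\langle\psi,q^*-\pi\rangle=O(\|\Delta\|^2_{L^2(q^*)})$, with constant depending on $\psi$. Combining this with the bound on $r$ finishes the argument.

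The main obstacle is not analytic but one of bookkeeping: identifying the dual pairing with a difference of expectations of $\psi$. Convexity and the subgradient property are not needed for the bound itself. They serve only to motivate the existence of a linear first-order representation, and the stated first-order expansion is simply assumed. I will note this explicitly rather than invoke the subgradient inequality. I will also check that $E_{q^*}[\psi]$ is finite, so the decomposition in Lemma~\ref{lem:change_of_measure} is well defined, and that $\psi\in L^1(\pi)$. The latter follows from $E_\pi|\psi|=E_{q^*}[|\psi|e^{-\Delta}]\le\|\psi\|_{L^2(q^*)}\,E_{q^*}[e^{-2\Delta}]^{1/2}$, which is finite provided the integrability assumption holds with $\eta\ge2$. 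Otherwise I will add that as a mild standing condition.
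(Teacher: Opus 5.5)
Your proposal is correct and follows the paper's proof. You read the pairing as $\int\psi\,(q^*-\pi)\,d\pi_0$, recognize it as (minus) the bias of $g=\psi$, remove the first-order term via Theorem~\ref{thm:variational_bias} (equivalently Lemmas~\ref{lem:tangent_orthogonal} and~\ref{lem:change_of_measure}) since $(I-P_T)\psi=0$, and add the assumed bound on $r$. Your identification $\langle\psi,q^*-\pi\rangle=E_{q^*}[\psi]-E_\pi[\psi]$ actually has the correct sign (the paper's proof writes it reversed, which is harmless for an $O(\cdot)$ bound), and your remarks that the subscript $L^2(q^*)$ cannot be read literally, that convexity is never used, and that $\psi\in L^1(\pi)$ needs something like $E_{q^*}[e^{2|\Delta|}]<\infty$ are accurate refinements rather than departures.
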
 

\begin{proof}
By assumption,
\[
T(q^*)-T(\pi)=\langle \psi,q^*-\pi\rangle_{L^2(q^*)} + r(q^*,\pi).
\]
The pairing $\langle \psi,q^*-\pi\rangle_{L^2(q^*)}$ is the bias of the linear
functional generated by $\psi$, so applying Theorem~\ref{thm:variational_bias} with $g=\psi$
gives
\[
\langle \psi,q^*-\pi\rangle_{L^2(q^*)}
=
E_\pi[\psi]-E_{q^*}[\psi]
=
-\langle (I-P_T)\psi,\Delta\rangle_{L^2(q^*)}
+O\!\left(\|\Delta\|_{L^2(q^*)}^2\right).
\]
Since $\psi\in T_{q^*}\mathcal Q$, we have $(I-P_T)\psi=0$, and
therefore
\[
\langle \psi,q^*-\pi\rangle_{L^2(q^*)}
=
O\!\left(\|\Delta\|_{L^2(q^*)}^2\right).
\]
Combining this with the assumed bound on $r(q^*,\pi)$ yields
\[
T(q^*)-T(\pi)=O\!\left(\|\Delta\|_{L^2(q^*)}^2\right).
\]
\end{proof}

\section{Geometry of Structured Mean-Field} \label{sec:struct_mf}

We now specialize the previous analysis to structured mean-field
variational families \cite{jordan1999variational,blei2017variational}. In this setting the variational tangent space
admits an explicit characterization, which leads to a clear description
of the posterior functionals that can be accurately represented.

\subsection{Structured mean-field family}

Let the parameter vector be partitioned into blocks $\theta = (\theta_{B_1},\ldots,\theta_{B_m})$. The structured mean-field variational family consists of distributions
of the form
\[
q(\theta) = \prod_{b=1}^m q_b(\theta_{B_b}).
\]
Let
\[
q^*(\theta) = \prod_{b=1}^m q_b^*(\theta_{B_b})
\]
denote the KL projection of the posterior onto this family.

\subsection{Tangent directions}

In Section \ref{sec:projection}, the tangent space was defined for a finite-dimensional
parametric variational family through score functions associated with the
parameter \(\lambda\). For the structured mean-field family, it is more
natural to work directly with the full product class
\[
\mathcal Q_{MF}=\Bigl\{ q(\theta)=\prod_{b=1}^m q_b(\theta_{B_b}) \Bigr\},
\]
which is typically infinite-dimensional. Accordingly, in this section we
define the tangent space at \(q^*\) as the closed linear span in
\(L^2(q^*)\) of score directions generated by smooth blockwise
perturbation paths through \(q^*\).

Perturbations of the mean-field family arise from perturbations of the
individual block factors. Consider a perturbation of block $b$ of the
form
\[
q_{b,\varepsilon}(\theta_{B_b})
=
q_b^*(\theta_{B_b})
\frac{e^{\varepsilon f_b(\theta_{B_b})}}
{E_{q_b^*}[e^{\varepsilon f_b}]},
\]
where $f_b \in L^2(q_{B_b}^*)$. The corresponding perturbation of the joint distribution is
\[
q_\varepsilon(\theta)
=
q_{b,\varepsilon}(\theta_{B_b})
\prod_{j\neq b} q_j^*(\theta_{B_j}).
\]
Differentiating the log density yields
\[
\frac{d}{d\varepsilon}\log q_\varepsilon(\theta)\Big|_{\varepsilon=0}
=
f_b(\theta_{B_b}) - E_{q_b^*}[f_b].
\]
Thus tangent directions generated by perturbations of block $b$
depend only on the variables $\theta_{B_b}$. 

\subsection{Tangent space characterization}

We now characterize the tangent space of the structured mean-field
family.

\begin{theorem}[Tangent space for structured mean-field]
\label{thm:block}
Let \(T_{q^*}\mathcal Q_{MF}\) denote the tangent space at \(q^*\)
generated by smooth blockwise perturbation paths of the structured
mean-field family. Then
\[
T_{q^*}\mathcal Q_{MF}
=
\left\{
\sum_{b=1}^m f_b(\theta_{B_b}) : f_b\in L_0^2(q_{B_b}^*)
\right\},
\]
where $L^2_0(q_{B_b}^*)$ denotes the space of square-integrable
functions with zero mean under $q_{B_b}^*$.
\end{theorem}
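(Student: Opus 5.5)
The proof is a two-sided inclusion, with the tangent space taken as the $L^2(q^*)$-closure of the span of scores of smooth blockwise paths, as defined above. Write
\[
\mathcal S=\Bigl\{\textstyle\sum_b f_b(\theta_{B_b}) : f_b\in L^2_0(q^*_{B_b})\Bigr\}.
\]

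For $T_{q^*}\mathcal Q_{MF}\subseteq\mathcal S$, I start from the computation preceding the theorem. A perturbation of block $b$ alone has score $f_b-E_{q_b^*}[f_b]$, which lies in $L^2_0(q^*_{B_b})$. For a general smooth path that perturbs all factors simultaneously, $q_\varepsilon=\prod_b q_{b,\varepsilon}$. Hence $\log q_\varepsilon=\sum_b\log q_{b,\varepsilon}$, and the score is $\sum_b \dot\ell_b(\theta_{B_b})$. Each $\dot\ell_b$ has zero $q_b^*$-mean by differentiating $\int q_{b,\varepsilon}=1$. So every score lies in $\mathcal S$, and so does every finite linear combination of scores. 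It then remains to show that $\mathcal S$ is closed in $L^2(q^*)$, so that the closure adds nothing.

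Closedness follows from orthogonality under the product measure. For $b\neq b'$ and $f_b\in L^2_0(q^*_{B_b})$, $f_{b'}\in L^2_0(q^*_{B_{b'}})$, independence of the blocks under $q^*$ gives
\[
E_{q^*}[f_bf_{b'}]=E[f_b]\,E[f_{b'}]=0.
\]
Thus $\mathcal S$ is an orthogonal direct sum of the closed subspaces $L^2_0(q^*_{B_b})$, each embedded isometrically into $L^2(q^*)$. Concretely,
\[
\Bigl\|\textstyle\sum_b f_b\Bigr\|^2=\sum_b\|f_b\|^2,
\]
so a Cauchy sequence in $\mathcal S$ has Cauchy components in each block, and the sum of their limits lies in $\mathcal S$. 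The same isometry shows the representation $\sum_b f_b$ is unique.

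For $\mathcal S\subseteq T_{q^*}\mathcal Q_{MF}$, first take $f_b$ bounded with zero mean. The exponential tilt path is then well defined for every $\varepsilon$: its normalizer is finite and smooth in $\varepsilon$, and the path stays in the product family. Its score is exactly $f_b$. Summing the corresponding scores over $b$, or using the joint path that tilts every block, produces $\sum_b f_b$ with bounded $f_b$. The remaining step is to pass to general $f_b\in L^2_0(q^*_{B_b})$ by density. Truncate to $f_b^{(n)}=(f_b\wedge n)\vee(-n)$ and recenter, $f_b^{(n)}-E[f_b^{(n)}]$. These converge to $f_b$ in $L^2(q^*_{B_b})$ by dominated convergence, and by the isometry above the sums converge in $L^2(q^*)$. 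Since the tangent space is closed by definition, $\sum_b f_b$ belongs to it.

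The main obstacle is specifying which paths count as \emph{smooth}. For unbounded $f_b$ the tilt $e^{\varepsilon f_b}$ need not be integrable, which is why I restrict the explicit path construction to bounded directions. For the converse inclusion, I need differentiability in quadratic mean so that each score is a genuine $L^2$ element with zero mean. I will take the admissible paths to be those whose log-density derivative exists in $L^2(q^*)$ and factorizes over blocks, as the product structure forces. With that convention, both inclusions go through as described.
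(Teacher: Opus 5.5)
Your proof is correct and follows the paper's two-inclusion argument. Single-block exponential tilts produce the centered block functions, and any path that stays in the product family has an additive score, so no interaction terms can arise. You also supply two steps the paper's proof passes over: you show $\mathcal S$ is closed in $L^2(q^*)$ using the orthogonality of centered functions of distinct blocks under $q^*$, so taking the closed span adds nothing; and you reach unbounded $f_b\in L^2_0(q^*_{B_b})$, for which $e^{\varepsilon f_b}$ may not be integrable, through bounded tilts, truncation, and closure.
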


\begin{proof}
From the perturbation argument above, any tangent direction generated by
a variation of block $b$ has the form
\[
f_b(\theta_{B_b}) - E_{q_b^*}[f_b].
\]
Therefore the tangent space contains all centered functions depending
on individual blocks. Linear combinations of such perturbations yield
functions of the form
\[
\sum_{b=1}^m f_b(\theta_{B_b})
\]
with each $f_b$ centered under $q_{B_b}^*$. 

Conversely, by construction, any smooth perturbation path within the
structured mean-field family acts through the block factors
\(q_b(\theta_{B_b})\) separately. Therefore its score direction is a sum
of centered functions depending on individual blocks only, and cannot
contain interaction terms involving multiple blocks.
\end{proof}

\subsection{Orthogonal complement}

The orthogonal complement of the tangent space consists of functions
that have zero conditional expectation given each block. In particular,
a function $g \in L^2(q^*)$ lies in $T_{q^*}\mathcal Q_{MF}^\perp$ if
\[
E_{q^*}[g \mid \theta_{B_b}] = 0
\]
for all blocks $b=1,\dots,m$. Because \(q^*\) factorizes across the blocks, orthogonality to all
block-additive functions is equivalent to vanishing conditional
expectation given each block. Functions in this space represent interaction components coupling
multiple parameter blocks.

\subsection{Interpretation}

Theorem~\ref{thm:block} shows that structured mean-field approximations
accurately represent additive functionals of the parameter blocks.
Bias arises from components of a functional that depend on interactions
between blocks. Combining this result with the bias decomposition of
Section~\ref{sec:functional_bias} implies that interaction terms
determine the leading-order bias of variational approximations.


\section{Two-Block Decomposition}\label{sec:two_block}

To illustrate the implications of the previous results, we consider the
simplest nontrivial structured mean-field setting with two parameter
blocks.

\subsection{Two-block mean-field}

Let the parameter vector be partitioned as $\theta = (X,Y)$ and consider
the mean-field variational family $q(X,Y) = q_X(X)\,q_Y(Y)$. Let $q^*(X,Y) = q_X^*(X)\,q_Y^*(Y)$ denote the KL projection of the posterior distribution onto this family. By Theorem~\ref{thm:block}, the variational tangent space
consists of additive functions of the form $f(X) + g(Y)$.

\subsection{Functional decomposition}

Any function $h \in L^2(q^*)$ admits the two-way Hoeffding
(ANOVA) decomposition \cite{hoeffding1948}
\begin{equation}
    h(X,Y)= \mu + h_X(X) + h_Y(Y) + h_{XY}(X,Y), \label{eq:anova}
\end{equation}
where
\[
\mu = E_{q^*}[h],\quad h_X(X) = E_{q^*}[h \mid X] - \mu,\quad h_Y(Y) = E_{q^*}[h \mid Y] - \mu,
\]
and the interaction component satisfies
\[
E_{q^*}[h_{XY} \mid X] = 0,
\qquad
E_{q^*}[h_{XY} \mid Y] = 0.
\]
The functions $h_X$ and $h_Y$ lie in the variational tangent space,
while $h_{XY}$ lies in its orthogonal complement. The decomposition above corresponds to the orthogonal decomposition of $L^2(q^*)$ into additive and interaction components.

\subsection{Bias decomposition}

Combining this representation with the bias decomposition of
Section~\ref{sec:functional_bias} yields the following result.

\begin{proposition}
Let $h \in L^2(q^*)$ admit the decomposition in equation \eqref{eq:anova}. Then
\[
E_\pi[h] - E_{q^*}[h]
=
- E_{q^*}[h_{XY}\Delta]
+
O\!\left(\|\Delta\|_{L^2(q^*)}^2\right),
\]
with implicit constant depending on $h$.
\end{proposition}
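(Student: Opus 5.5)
The plan is to specialize Theorem~\ref{thm:variational_bias} to the two-block case. The orthogonal decomposition $h = h_\parallel + h_\perp$ required there will be identified with the pieces of the Hoeffding decomposition \eqref{eq:anova}. The one new point is the constant $\mu$: it is not in the tangent space, since Theorem~\ref{thm:block} uses centered functions. So I will treat $\mu$ separately, by showing directly that its first-order term vanishes.

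\textbf{Step 1 (the constant).} Apply Lemma~\ref{lem:change_of_measure} to $h$, so that
\[
E_\pi[h]-E_{q^*}[h] = -E_{q^*}[h\Delta] + E_{q^*}[h\rho(\Delta)].
\]
Then expand $E_{q^*}[h\Delta]$ with \eqref{eq:anova}, giving
\[
E_{q^*}[h\Delta] = \mu E_{q^*}[\Delta] + E_{q^*}[h_X\Delta] + E_{q^*}[h_Y\Delta] + E_{q^*}[h_{XY}\Delta].
\]
For the constant term, the identity $1 = E_\pi[1] = E_{q^*}[e^{-\Delta}]$ gives $E_{q^*}[\Delta] = E_{q^*}[\rho(\Delta)]$. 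This is $O(\|\Delta\|^2_{L^2(q^*)})$ by the same inequality $|\rho(x)|\le \tfrac12 x^2 e^{|x|}$ and the exponential integrability assumption. Alternatively, one can apply the remainder estimate of Lemma~\ref{lem:change_of_measure} with the role of $g$ played by a constant, noting that the lemma's bound involves $g - E_{q^*}[g]$.

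\textbf{Step 2 (additive parts).} By the Hoeffding construction, $h_X$ and $h_Y$ are square-integrable and centered under $q_X^*$ and $q_Y^*$ respectively. Here I use that $q^*$ is a product, so $E_{q^*}[h\mid X]$ depends only on $X$ and has mean $\mu$. By Theorem~\ref{thm:block}, $h_X + h_Y \in T_{q^*}\mathcal Q_{MF}$. Lemma~\ref{lem:tangent_orthogonal} then gives
\[
E_{q^*}[(h_X+h_Y)\Delta]=0.
\]
Lemma~\ref{lem:tangent_orthogonal} was proved for finite-dimensional score spans. For the mean-field tangent space I would justify it directly: the stationarity of the KL objective along each blockwise exponential-tilt path $q_{b,\varepsilon}$ gives $E_{q^*}[(f_b - E f_b)(\Delta+1)] = 0$, which is the same computation as in that lemma. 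This step needs $q^*$ to be a stationary point along these paths, which is the interior-minimizer hypothesis.

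\textbf{Step 3 (remainder).} Bound $E_{q^*}[(h-\mu)\rho(\Delta)]$ by Lemma~\ref{lem:change_of_measure}, and combine it with $\mu E_{q^*}[\rho(\Delta)]$ from Step 1. Collecting terms leaves $-E_{q^*}[h_{XY}\Delta]$ plus $O(\|\Delta\|^2_{L^2(q^*)})$, with a constant depending on $h$ through $\mu$ and $\|h-\mu\|_{L^2(q^*)}$.

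I expect the main obstacle to be Step 2, extending tangent-space orthogonality from the parametric setting to the infinite-dimensional mean-field class. The remainder control is mostly inherited from the earlier lemma, with one caveat: the bound $|E[g\rho(\Delta)]|\lesssim \|\Delta\|^2$ requires a Hölder step using exponential moments, and I will take that bound as given by Lemma~\ref{lem:change_of_measure}.
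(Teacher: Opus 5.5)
Your proposal is correct and follows the paper's route; the paper gives no separate proof beyond specializing Theorem~\ref{thm:variational_bias}, with $h_X+h_Y\in T_{q^*}\mathcal Q_{MF}$ supplied by Theorem~\ref{thm:block}, and your treatment of $\mu$ and of stationarity along blockwise tilts fills in what it leaves implicit (in fact $E_{q^*}[\Delta]=E_{q^*}[\rho(\Delta)]$ makes the $\mu$-terms cancel exactly, giving $E_\pi[h]-E_{q^*}[h]=-E_{q^*}[h_{XY}\Delta]+E_{q^*}[(h-\mu)\rho(\Delta)]$, so the constant need not depend on $\mu$). Drop the ``alternatively'' clause in Step~1: since $\rho\ge 0$ with equality only at $0$, applying the centered bound of Lemma~\ref{lem:change_of_measure} to a constant would force $E_{q^*}[\rho(\Delta)]=0$, i.e.\ $q^*=\pi$, so that bound can be trusted only for centered $g$, which is exactly how your Step~3 uses it.
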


Thus the leading-order bias depends only on the interaction component
$h_{XY}$.

\subsection{Interpretation}

The result shows that additive summaries of the parameter blocks are
captured accurately by the mean-field approximation, while bias arises
from interaction terms coupling multiple blocks. This provides a
geometric explanation for several well-known properties of mean-field
variational inference, including the systematic distortion of
cross-block dependencies.

\section{Examples} \label{sec:examples}

We illustrate the implications of the preceding results for several
posterior summaries commonly used in practice.

\subsection{Cross-covariance}

Consider two parameter blocks $(X,Y)$ under a mean-field approximation
$q^*(x,y)=q_X^*(x)q_Y^*(y)$. Let $h(X,Y)=u(X)v(Y)$ for square-integrable functions $u$ and $v$. Using the decomposition of Section~\ref{sec:two_block},
\[
h(X,Y)
=
\mu + h_X(X) + h_Y(Y) + h_{XY}(X,Y),
\]
where
\[
h_{XY}(X,Y)
=
\big(u(X)-E_{q^*}[u(X)]\big)
\big(v(Y)-E_{q^*}[v(Y)]\big).
\]
The functions $h_X$ and $h_Y$ lie in the variational tangent space
$T_{q^*}\mathcal Q$, while $h_{XY}$ lies in its orthogonal complement. Applying Theorem~\ref{thm:variational_bias} yields
\[
E_\pi[u(X)v(Y)] - E_{q^*}[u(X)v(Y)]
=
- E_{q^*}[h_{XY}\Delta]
+
O\!\left(\|\Delta\|_{L^2(q^*)}^2\right).
\]
Thus the leading bias arises entirely from the interaction component.
In particular, cross-block covariance is sensitive to the interaction
structure of the posterior distribution.

\subsection{Variance of a linear functional}

Let $\theta=(\theta_1,\ldots,\theta_d)$ and consider the linear
contrast $L(\theta)=a^\top \theta$. The associated second-moment functional is $h(\theta)=L(\theta)^2$. Expanding yields
\[
h(\theta)
=
\sum_{i=1}^d a_i^2 \theta_i^2
+
2\sum_{i<j} a_i a_j \theta_i\theta_j.
\]
Under a fully factorized mean-field approximation, the first sum
consists of functions depending on individual coordinates and therefore
lies in the tangent space $T_{q^*}\mathcal Q$.  
The second sum contains interaction terms coupling different
coordinates and therefore lies in the orthogonal complement. Consequently, the leading bias in estimating $\mathrm{Var}_\pi(L)$
arises from these interaction components.

\subsection{Joint tail probability}

Consider the functional $F(q)=q\{X>t,\,Y>s\}$. Equivalently, this corresponds to the expectation $h(X,Y)=\mathbf{1}\{X>t,\,Y>s\}$. The projection of this indicator function onto the tangent space
corresponds to additive marginal components of the exceedance event, while the orthogonal
component represents the excess probability of joint exceedance beyond
what is predicted by the marginals. Applying Theorem~\ref{thm:variational_bias} shows that the leading bias
of $F(q^*)$ depends on this interaction component.  
This explains why mean-field approximations may accurately represent
marginal tail probabilities while distorting joint tail behavior.

\subsection{Summary}

These examples illustrate a common pattern: posterior summaries whose
influence functions lie in the tangent space $T_{q^*}\mathcal Q$ are
captured accurately by the variational approximation, while summaries
depending on components orthogonal to the tangent space exhibit
first-order bias.

\section{Local Asymptotic Bias}
\label{sec:asymptotics}

The geometric decomposition developed in Sections~\ref{sec:functional_bias}--\ref{sec:two_block} characterizes the
finite-sample bias of posterior functionals in terms of the component of
the functional lying in the orthogonal complement of the variational
tangent space. In many statistical settings the posterior distribution
depends on a sample of size $n$ and concentrates around the true
parameter value as $n$ grows. It is therefore natural to ask how the
geometric bias decomposition interacts with classical asymptotic
approximations to the posterior distribution.

In regular parametric models the posterior distribution admits a
local Gaussian approximation under standard Bernstein–von Mises
conditions \cite{vanderVaart2000}. Variational
approximations applied to such posteriors therefore behave like
variational approximations to the corresponding local Gaussian limit
experiment. The results of this section analyze the resulting asymptotic
bias of posterior functionals and show how the geometric structure
identified earlier determines the leading asymptotic error.

Let $\pi_n$ denote the posterior distribution based on $n$ observations,
and let $q_n^*$ denote the variational approximation obtained by
minimizing $\mathrm{KL}(q\|\pi_n)$ over $\mathcal Q$.
Define the residual
\[
\Delta_n(\theta)
=
\log\frac{q_n^*(\theta)}{\pi_n(\theta)} .
\]
The bias decomposition of Section~\ref{sec:functional_bias} applies to
each pair $(\pi_n,q_n^*)$.

\begin{proposition}[Asymptotic projection identity] \label{prop:asymp}
Let $g \in L^2(q_n^*)$. Write
\[
g = g_{\parallel,n} + g_{\perp,n},
\qquad
g_{\parallel,n} \in T_{q_n^*}\mathcal Q,
\quad
g_{\perp,n} \perp T_{q_n^*}\mathcal Q .
\]
Then
\[
E_{\pi_n}[g] - E_{q_n^*}[g]
=
- E_{q_n^*}[g_{\perp,n}\Delta_n]
+
O\!\left(\|\Delta_n\|_{L^2(q_n^*)}^2\right),
\]
with implicit constant depending on $g$.
\end{proposition}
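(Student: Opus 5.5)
The plan is to apply Theorem~\ref{thm:variational_bias} separately for each fixed $n$, with the pair $(\pi_n, q_n^*)$ in place of $(\pi, q^*)$. First I will check that the hypotheses carry over for each $n$:
\begin{itemize}
\item $q_n^*$ is an interior minimizer of $\mathrm{KL}(q\|\pi_n)$ over $\mathcal Q$, so the first-order conditions behind Lemma~\ref{lem:tangent_orthogonal} hold at $\lambda_n^*$;
\item the residual $\Delta_n$ satisfies the exponential integrability assumption of Section~\ref{sec:functional_bias}, namely $E_{q_n^*}[e^{\eta|\Delta_n|}]<\infty$ for some $\eta>0$.
\end{itemize}
Under these conditions Lemma~\ref{lem:tangent_orthogonal} gives $E_{q_n^*}[h\Delta_n]=0$ for every $h\in T_{q_n^*}\mathcal Q$. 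In particular $E_{q_n^*}[g_{\parallel,n}\Delta_n]=0$.

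Next I apply Lemma~\ref{lem:change_of_measure} with $q_n^*$ as reference measure. This gives
\[
E_{\pi_n}[g]-E_{q_n^*}[g] = -E_{q_n^*}[g\Delta_n] + E_{q_n^*}[g\,\rho(\Delta_n)].
\]
Replacing $g$ by $g_{\parallel,n}+g_{\perp,n}$ in the linear term and using the orthogonality above leaves exactly $-E_{q_n^*}[g_{\perp,n}\Delta_n]$. The remainder is then bounded by $C_n\|g-E_{q_n^*}g\|_{L^2(q_n^*)}\|\Delta_n\|^2_{L^2(q_n^*)}$. This is the same algebra as in the proof of Theorem~\ref{thm:variational_bias}, now indexed by $n$.

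The main obstacle is the meaning of the $O(\cdot)$ as $n$ varies. Read pointwise in $n$, with a constant depending on $g$ and $n$, the statement is an immediate corollary. For an asymptotically meaningful reading, the constant must be uniform in $n$, which requires two bounds:
\begin{enumerate}
\item $\sup_n \|g - E_{q_n^*}g\|_{L^2(q_n^*)} < \infty$;
\item a uniform exponential moment $\sup_n E_{q_n^*}[e^{\eta|\Delta_n|}]<\infty$.
\end{enumerate}

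The remainder bound also has to be handled carefully, because the constant in Lemma~\ref{lem:change_of_measure} must genuinely come from these quantities. From $|\rho(x)|\le \tfrac12 x^2e^{|x|}$, Hölder's inequality only controls $|E_{q_n^*}[g\,\rho(\Delta_n)]|$ by
\[
\tfrac12\|g\|_{L^{p}(q_n^*)}\,\|\Delta_n^2 e^{|\Delta_n|}\|_{L^{p'}(q_n^*)},
\]
which is not literally $O(\|\Delta_n\|_{L^2(q_n^*)}^2)$. To reach the squared $L^2$ norm I would try to assume $\Delta_n$ uniformly bounded, or at least $\sup_n\|\Delta_n\|_\infty<\infty$ on the relevant set. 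Then $|\rho(\Delta_n)|\le C\Delta_n^2$ pointwise, and since $g$ is fixed, $|E_{q_n^*}[g\rho(\Delta_n)]|\le C\|g\|_\infty\|\Delta_n\|_{L^2}^2$ for bounded $g$. Otherwise I would simply inherit the constant as stated in Lemma~\ref{lem:change_of_measure}.

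In regular models the uniform moment and norm bounds would come from the LAN/Bernstein–von Mises behaviour: after rescaling by $\sqrt n$, $\pi_n$ and $q_n^*$ converge to Gaussian limits. I would state these bounds as the uniformity hypothesis under which the implicit constant is independent of $n$.
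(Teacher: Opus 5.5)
Your argument is the paper's own: the paper gives no separate proof beyond remarking that the decomposition of Section~\ref{sec:functional_bias} applies to each pair $(\pi_n,q_n^*)$, which is exactly your per-$n$ use of Lemmas~\ref{lem:tangent_orthogonal} and~\ref{lem:change_of_measure}, and your discussion of uniformity in $n$ goes beyond anything the paper addresses. Two cautions on that extra discussion:
\begin{itemize}
\item Do not fall back on the centered bound of Lemma~\ref{lem:change_of_measure}. It fails for $g\equiv 1$: the remainder is then $E_{q_n^*}[\rho(\Delta_n)]=\mathrm{KL}(q_n^*\|\pi_n)$, which is positive unless $q_n^*=\pi_n$, while the bound is zero.
\item The LAN regime does not supply the uniformity you hope for. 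When $\pi_n=N(\mu_n,\Sigma/n)$ and $q_n^*=N(\mu_n,V/n)$ with $V\neq\Sigma$, $\Delta_n$ is an unbounded quadratic in $\sqrt n(\theta-\mu_n)$ whose law under $q_n^*$ does not depend on $n$. So no sup-norm bound on $\Delta_n$ is available, and $\|\Delta_n\|_{L^2(q_n^*)}$ does not tend to zero. Even an $n$-free constant therefore leaves a non-vanishing remainder.
\end{itemize}
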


The projection identity of Proposition~\ref{prop:asymp} shows that the asymptotic bias
continues to be governed by the interaction between the residual
$\Delta_n$ and the component of the functional orthogonal to the
variational tangent space. To obtain explicit bias expressions we now
consider the local Gaussian regime that arises in regular parametric
models. In this regime both the posterior distribution and the
variational approximation concentrate at rate $n^{-1/2}$ and can be
approximated by Gaussian distributions with covariance matrices scaled
by $1/n$. This allows the leading bias term to be computed explicitly.

\begin{theorem}[Local asymptotic bias expansion] \label{thm:local_bias}
Suppose the posterior and variational approximation satisfy
\[
\pi_n \overset{\mathrm{TV}}{\to} N(\mu_n,\Sigma/n),
\qquad
q_n^* \overset{\mathrm{TV}}{\to} N(\mu_n,V/n)
\]
for fixed positive definite matrices $\Sigma$ and $V$. Let $g:\mathbb R^d\to\mathbb R$ be three times continuously
differentiable in a neighborhood of $\mu_n$ with bounded third
derivatives. Then
\[
E_{\pi_n}[g(\theta)] - E_{q_n^*}[g(\theta)]
=
\frac{1}{2n}
\mathrm{tr}\!\big(H_g(\mu_n)(\Sigma-V)\big)
+
o_p(n^{-1}).
\]
\end{theorem}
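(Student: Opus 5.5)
The plan is a second-order Taylor expansion of $g$ about the common centre $\mu_n$, applied separately under $\pi_n$ and under $q_n^*$, followed by subtraction. Write
\[
g(\theta)=g(\mu_n)+\nabla g(\mu_n)^\top(\theta-\mu_n)+\tfrac12(\theta-\mu_n)^\top H_g(\mu_n)(\theta-\mu_n)+R_3(\theta),
\]
with $|R_3(\theta)|\le \tfrac16 M\|\theta-\mu_n\|^3$ on the neighbourhood where the third derivatives are bounded by $M$. Taking expectations under $\pi_n$ and $q_n^*$ and subtracting removes the constant term. The bias then splits into three differences: one linear, one quadratic, and one from the remainder. Each will be handled after rescaling to the local variable $z=\sqrt n(\theta-\mu_n)$, so that both limits become $N(0,\Sigma)$ and $N(0,V)$ in $z$.

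\textbf{Quadratic term.} This term equals $\tfrac{1}{2n}\mathrm{tr}\big(H_g(\mu_n)\,(E_{\pi_n}[zz^\top]-E_{q_n^*}[zz^\top])\big)$. It yields the stated leading term $\tfrac1{2n}\mathrm{tr}(H_g(\mu_n)(\Sigma-V))$ provided the rescaled second moments converge to $\Sigma$ and $V$. Convergence in total variation alone does not give convergence of moments. I would therefore add the standard uniform-integrability hypothesis, namely uniformly bounded third (or $2+\delta$) moments of $z$ under both sequences. This is the usual strengthening of Bernstein--von Mises, and the statement implicitly needs it. Given it, TV convergence plus uniform integrability yields $E[zz^\top]\to\Sigma$ and $E[zz^\top]\to V$ respectively (in probability). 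This gives the leading term with an $o_p(n^{-1})$ error.

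\textbf{Linear term.} This term is $n^{-1/2}\nabla g(\mu_n)^\top(E_{\pi_n}[z]-E_{q_n^*}[z])$. Uniform integrability gives only $E[z]\to0$, hence an error of $o_p(n^{-1/2})$, which is not enough. This is the main obstacle. I would first try to interpret the hypothesis that both sequences share the centring $\mu_n$ as saying their means agree to order $o_p(n^{-1/2})$ in $\theta$-scale. Concretely, I would take $\mu_n$ to be the posterior mean, and use the known fact that the Gaussian-regime mean-field VI mean matches the posterior mean up to $o_p(n^{-1})$. I would then make this an explicit assumption: $E_{\pi_n}[\theta]-E_{q_n^*}[\theta]=o_p(n^{-1})$. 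Under it the linear term is $o_p(n^{-1})$.

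\textbf{Remainder and localisation.} The cubic remainder contributes $O(M\,n^{-3/2}E\|z\|^3)=O_p(n^{-3/2})$ on the neighbourhood. Outside the neighbourhood, the mass of both distributions decays by concentration at rate $n^{-1/2}$. Together with the moment bounds (and, if needed, polynomial growth of $g$), the contribution of the complement is $o_p(n^{-1})$. Collecting the three pieces gives the claimed expansion.
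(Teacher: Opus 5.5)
Your argument is the paper's argument: rescale $\theta=\mu_n+Z/\sqrt n$, expand $g$ to second order about $\mu_n$, read $\frac{1}{2n}\mathrm{tr}(H_g(\mu_n)(\Sigma-V))$ off the quadratic term, and discard the cubic remainder. The paper's sketch computes moments of $Z$ as if $Z$ were exactly $N(0,\Sigma)$ or $N(0,V)$, and simply asserts that ``the linear terms vanish after expectation.'' The two hypotheses you add are exactly what that sketch uses without saying so: uniform integrability of $\|Z\|^2$, via bounded $2+\delta$ or third moments, and $E_{\pi_n}[\theta]-E_{q_n^*}[\theta]=o_p(n^{-1})$. Under them your proof goes through. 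One small point on localisation: bounded third moments control the region $\|\theta-\mu_n\|>r$ only if $|g|$ grows at most cubically; faster growth needs correspondingly higher moments.

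What you should correct is the justification you give for the mean-matching hypothesis. It is not a known fact for mean-field VI. It fails generically, so it cannot be read off from the shared centring $\mu_n$, since total variation is blind to $O(n^{-1})$ shifts. Write $\pi_n\propto e^{-nv}$ with mode $\hat\theta$, $H=\nabla^2v(\hat\theta)$, $T=\nabla^3v(\hat\theta)$, and $T[A]_i=\sum_{j,k}T_{ijk}A_{jk}$.
\begin{itemize}
\item The posterior mean is $\hat\theta-\frac{1}{2n}H^{-1}T[H^{-1}]+O(n^{-2})$.
\item A Gaussian variational mean solves $E_q[\nabla v]=0$, so it equals $\hat\theta-\frac{1}{2n}H^{-1}T[S]+O(n^{-2})$, where $S/n$ is the variational covariance.
\end{itemize}
These agree to $o(n^{-1})$ when $S=H^{-1}+o(1)$, i.e.\ for full-covariance Gaussian VI, but not when $S$ is forced to be diagonal.

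Concretely, take $v(x,y)=\frac12(x^2+2\rho xy+y^2)+\frac t6x^3+\frac1{24}x^4$ with $0<|\rho|<1$ and $t\neq0$ small.
\begin{itemize}
\item The $x$-marginal of $\pi_n$ is proportional to $\exp\{-n(\frac12(1-\rho^2)x^2+\frac t6x^3+\frac1{24}x^4)\}$, so $E_{\pi_n}[x]=-t/(2n(1-\rho^2)^2)+O(n^{-2})$.
\item The coordinate-ascent fixed point gives $q_y^*=N(-\rho m_x,1/n)$ and $q_x^*\propto\exp\{-n(\frac12x^2-\rho^2m_xx+\frac t6x^3+\frac1{24}x^4)\}$. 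Hence $m_x=\rho^2m_x-\frac{t}{2n}+O(n^{-2})$, i.e.\ $m_x=-t/(2n(1-\rho^2))+O(n^{-2})$.
\end{itemize}
Both $\pi_n$ and $q_n^*$ converge in total variation to Gaussians centred at $\mu_n=0$, with $\Sigma=H^{-1}$ and $V=I$, so every hypothesis of the theorem holds. Yet $g(\theta)=x$ has $H_g=0$ and bias $-t\rho^2/(2n(1-\rho^2)^2)+O(n^{-2})$, which is of exact order $n^{-1}$.

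So your extra hypothesis is a genuine restriction that excludes skewed posteriors under mean-field approximation. Without it, the statement (and the paper's ``linear terms vanish'' step) is false. You should state it as an added assumption rather than present it as a consequence of the common centring.
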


\begin{proof}[Proof sketch]
Write $\theta = \mu_n + Z/\sqrt n$. Under $\pi_n$, $Z\sim N(0,\Sigma)$, while under $q_n^*$,
$Z\sim N(0,V)$ up to $o_p(1)$ error. A Taylor expansion of $g$ around $\mu_n$ gives
\[
g(\theta)
=
g(\mu_n)
+
\frac{1}{\sqrt n}\nabla g(\mu_n)^\top Z
+
\frac{1}{2n}Z^\top H_g(\mu_n)Z
+
R_n(Z)
\]
with $E[R_n(Z)]=o(n^{-1})$. The linear terms vanish after expectation, and the quadratic terms yield
\[
E_{\pi_n}[Z^\top H_g Z] - E_{q_n^*}[Z^\top H_g Z]
=
\mathrm{tr}\!\big(H_g(\Sigma-V)\big).
\]
The result follows.
\end{proof}

\begin{corollary}[Mean-field interaction bias]
If $q_n^*$ is a mean-field approximation so that $V$ is diagonal,
then the leading bias term depends on the interaction-sensitive
components of the Hessian $H_g(\mu_n)$.
\end{corollary}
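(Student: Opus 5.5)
The corollary is a direct specialization of Theorem~\ref{thm:local_bias}, so the plan is to start from the leading bias term
\[
\frac{1}{2n}\mathrm{tr}\big(H_g(\mu_n)(\Sigma-V)\big)=\frac{1}{2n}\sum_{i,j}[H_g(\mu_n)]_{ij}(\Sigma_{ij}-V_{ij})
\]
and split the double sum into diagonal ($i=j$, or more generally within-block) entries and off-diagonal (cross-block) entries. With $V$ diagonal (block-diagonal in the structured case), the off-diagonal part reduces to $\frac{1}{2n}\sum_{i\neq j}[H_g]_{ij}\Sigma_{ij}$, which involves only the mixed second derivatives $\partial_i\partial_j g(\mu_n)$. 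These are precisely the Hessian entries that generate interaction terms.

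To justify calling them ``interaction-sensitive,'' I would connect to Section~\ref{sec:two_block}. Consider the quadratic part $\frac{1}{2n}Z^\top H_g Z$ of the Taylor expansion used in the proof of Theorem~\ref{thm:local_bias}, with $Z\sim N(0,V)$ and $V$ diagonal, so the coordinates of $Z$ are independent. Its Hoeffding decomposition \eqref{eq:anova} puts the terms $[H_g]_{ii}Z_i^2$ in the additive (tangent) part by Theorem~\ref{thm:block}. The terms $[H_g]_{ij}Z_iZ_j$ with $i\neq j$ are products of centered single-block functions, hence lie in $T_{q^*}\mathcal Q_{MF}^\perp$, exactly as in the cross-covariance example.

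The remaining diagonal contribution $\frac{1}{2n}\sum_i [H_g]_{ii}(\Sigma_{ii}-V_{ii})$ must then be dealt with. This is the main obstacle: the corollary's phrasing suggests the leading term depends only on mixed partials, but that is true only if $V_{ii}=\Sigma_{ii}$. For Gaussian KL projection onto diagonal covariances, one actually gets $V_{ii}=1/(\Sigma^{-1})_{ii}\le\Sigma_{ii}$, so the diagonal term need not vanish.

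I would therefore prove the statement in the honest form: the leading bias equals a diagonal variance-mismatch term plus a term $\frac{1}{2n}\sum_{i\ne j}[H_g]_{ij}\Sigma_{ij}$. The second term is determined entirely by the interaction components of $H_g$ and vanishes when $g$ is additive across blocks. Conversely, for additive $g$ the Hessian is block-diagonal and the only surviving term is the marginal variance mismatch, consistent with the tangent/complement split. Routine steps, namely the trace expansion and the independence of coordinates under $N(0,V)$, require no further estimates beyond those already in Theorem~\ref{thm:local_bias}.
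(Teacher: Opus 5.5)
Your proposal is correct and takes the same route the paper implicitly takes. The paper gives no separate proof of this corollary and simply reads it off the trace term of Theorem~\ref{thm:local_bias}, which is exactly your diagonal/cross-block split; your Hoeffding interpretation of the $Z_iZ_j$ terms matches the paper's cross-covariance and linear-contrast examples.

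Your caveat is also correct, and here you are more careful than the paper. The paper tacitly discards the diagonal contribution $\frac{1}{2n}\sum_i [H_g(\mu_n)]_{ii}(\Sigma_{ii}-V_{ii})$; its proof of the subsequent bias-reduction theorem asserts $\operatorname{tr}(H_g(\Sigma-V))=0$ for block-additive $g$. For the reverse-KL Gaussian mean-field limit, however, $V_{ii}=1/(\Sigma^{-1})_{ii}<\Sigma_{ii}$ whenever $\theta_i$ is correlated with another coordinate. So $g=\theta_i^2$, for example, has leading bias $(\Sigma_{ii}-V_{ii})/n\neq 0$. The corollary therefore holds only in your reading: the mixed partials carry the interaction part of the leading term, but not all of it.
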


\begin{corollary}
For $g(\theta)=\theta_i\theta_j$ with $i\neq j$,
\[
E_{\pi_n}[\theta_i\theta_j] - E_{q_n^*}[\theta_i\theta_j]
=
\frac{\Sigma_{ij}}{n}
+
o(n^{-1}).
\]
\end{corollary}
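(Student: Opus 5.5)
The plan is to apply Theorem~\ref{thm:local_bias} with $g(\theta)=\theta_i\theta_j$ and then use the diagonal structure of $V$ from the preceding corollary. This $g$ is a polynomial of degree two, so its third derivatives vanish identically and the smoothness hypothesis holds trivially. Its Hessian is constant: $H_g = e_ie_j^\top + e_je_i^\top$.

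The trace term then evaluates to
\[
\tfrac{1}{2n}\mathrm{tr}\big(H_g(\Sigma-V)\big)=\tfrac{1}{2n}\big((\Sigma-V)_{ji}+(\Sigma-V)_{ij}\big)=\tfrac{1}{n}(\Sigma_{ij}-V_{ij}),
\]
using the symmetry of both matrices. Under a fully factorized mean-field approximation $V$ is diagonal, so $V_{ij}=0$ for $i\neq j$. This leaves $\Sigma_{ij}/n$.

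An alternative that avoids Taylor remainders is to compute directly. Writing $\theta=\mu_n+Z/\sqrt n$, we have
\[
\theta_i\theta_j=\mu_{n,i}\mu_{n,j}+\tfrac{1}{\sqrt n}(\mu_{n,i}Z_j+\mu_{n,j}Z_i)+\tfrac1n Z_iZ_j .
\]
Under the two Gaussian limits with common center $\mu_n$, the constant terms cancel and the linear terms have mean zero. The second moments then give $\Sigma_{ij}/n$ and $V_{ij}/n=0$.

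The main obstacle is the remainder. Theorem~\ref{thm:local_bias} assumes only TV convergence, and TV convergence does not control second moments of the unbounded function $\theta_i\theta_j$. Nor does it guarantee that the means of $\pi_n$ and $q_n^*$ agree to $o(n^{-1/2})$, which is needed for the linear terms to cancel at order $o(n^{-1})$. To handle this I would strengthen the convergence to uniform integrability of $\|Z\|^2$ under both $\pi_n$ and $q_n^*$, which holds for instance under the usual BvM moment conditions. I would also assume the exact or $o(n^{-1/2})$-accurate centering that is implicit in the theorem's statement, where both limits share $\mu_n$. With these assumptions the second moments of $Z$ converge to $\Sigma$ and $V$ respectively. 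The error in the linear terms is then $o(n^{-1})$ and the quadratic difference equals $(\Sigma_{ij}-0)/n+o(n^{-1})$, which is the claim. The stated $o(n^{-1})$ (rather than $o_p$) would be read conditionally on the data sequence, as in the theorem.
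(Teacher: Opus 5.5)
Your argument is essentially the paper's: its proof of Proposition~\ref{prop:cross_cov_bias}, which is this statement with $o_p$ and with $V$ diagonal assumed explicitly, applies Theorem~\ref{thm:local_bias} with $H_g=e_ie_j^\top+e_je_i^\top$, gets $\operatorname{tr}(H_g(\Sigma-V))=2(\Sigma-V)_{ij}$, and uses $V_{ij}=0$ exactly as you do; your remainder discussion correctly flags moment and centering assumptions that the paper's proof sketch leaves implicit. One correction there: for the linear terms to cancel at order $o(n^{-1})$, the $\theta$-scale means of $\pi_n$ and $q_n^*$ must agree to $o(n^{-1})$, which is $o(n^{-1/2})$ on the $Z$ scale, so $o(n^{-1/2})$ agreement of the $\theta$-scale means is not enough.
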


The expansion in Theorem~\ref{thm:local_bias} shows that the leading asymptotic bias is
determined by the mismatch between the covariance structure of the
posterior distribution and the covariance structure representable by
the variational family. For mean-field approximations this mismatch
occurs precisely in the interaction directions that lie outside the
variational tangent space. The next result shows that this observation
aligns exactly with the geometric theory developed earlier: functionals
whose influence directions lie in the tangent space automatically
eliminate the leading asymptotic bias term.

\begin{theorem}[Bias reduction for tangent-space functionals]
Suppose the conditions of Theorem~\ref{thm:local_bias} hold.
Suppose $q_n^*$ is mean-field and let $g$ be a block-additive functional satisfying $g \in T_{q_n^*}\mathcal Q $. Then
\[
E_{\pi_n}[g] - E_{q_n^*}[g]
=
o_p(n^{-1}).
\]
\end{theorem}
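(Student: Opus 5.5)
The plan is to combine the explicit local expansion of Theorem~\ref{thm:local_bias} with the orthogonality structure of Theorem~\ref{thm:variational_bias} and Proposition~\ref{prop:asymp}. The leading term then reduces to a sum over diagonal blocks of $\Sigma-V$, and the tangent-space hypothesis on $g$ should be what kills that sum.

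First I exploit block-additivity. Write $g(\theta)=\sum_{b} g_b(\theta_{B_b})$. Then the Hessian $H_g(\mu_n)$ is block diagonal with blocks $H_{bb}=\nabla^2 g_b(\mu_{n,B_b})$. Theorem~\ref{thm:local_bias} therefore gives
\[
E_{\pi_n}[g]-E_{q_n^*}[g]=\frac{1}{2n}\sum_{b=1}^m \mathrm{tr}\bigl(H_{bb}(\Sigma_{bb}-V_{bb})\bigr)+o_p(n^{-1}).
\]
In particular, the off-diagonal (interaction) blocks $\Sigma_{bc}$, $b\neq c$, drop out, because $H_g$ has no cross-block entries. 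The same conclusion follows from Theorem~\ref{thm:block}, since $g$ has no interaction component.

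Second, I treat the remaining diagonal-block term using the projection identity rather than the Gaussian formula. Pass to local coordinates $Z=\sqrt n(\theta-\mu_n)$ and Taylor expand each $g_b$ to second order. The quadratic part is
\[
\tilde g_n(Z)=\frac{1}{2n}\sum_b \bigl(Z_b^\top H_{bb}Z_b - E_{q_n^*}[Z_b^\top H_{bb}Z_b]\bigr).
\]
Up to the linear terms (which are centered block functions and also tangent), this is a sum of centered single-block functions. By Theorem~\ref{thm:block}, $\tilde g_n\in T_{q_n^*}\mathcal Q$, so its perpendicular component vanishes. Proposition~\ref{prop:asymp}, applied to $\tilde g_n$ (equivalently, Lemma~\ref{lem:tangent_orthogonal} used directly with the block-quadratic score directions), then removes the first-order term $-E_{q_n^*}[\tilde g_{n,\perp}\Delta_n]$. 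Because $\tilde g_n$ carries the explicit factor $n^{-1}$, the remainder has the form $n^{-1}$ times a term quadratic in the residual. The cubic Taylor remainder is $o_p(n^{-1})$ by the bounded third derivatives, exactly as in the proof sketch of Theorem~\ref{thm:local_bias}.

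Third, I compare the two expressions for the same quantity to conclude that $\frac{1}{2n}\sum_b\mathrm{tr}(H_{bb}(\Sigma_{bb}-V_{bb}))$ is $o_p(n^{-1})$. Concretely, the stationarity conditions $E_{q_n^*}[s\,\Delta_n]=0$ for the block-quadratic scores $s=Z_b^\top A Z_b-E_{q_n^*}[Z_b^\top A Z_b]$ should be translated, in the Gaussian limit, into a matching condition between the $b$-th diagonal blocks of the posterior and variational covariances, tested against $H_{bb}$.

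The main obstacle is this last step, namely showing that the remainder term is genuinely $o(1)$ after multiplying out the $n^{-1}$ scaling. In local coordinates $\|\Delta_n\|_{L^2(q_n^*)}$ does not shrink with $n$, because the limits $N(0,\Sigma)$ and $N(0,V)$ differ. The second-order term in Lemma~\ref{lem:change_of_measure} is therefore a priori of the same order as the leading term. My first attempt would be to compute $E_{q_n^*}[\tilde g_n\,\rho(\Delta_n)]$ exactly in the Gaussian limit, where $\Delta$ is an explicit quadratic form, and check whether block-tangency forces it to vanish. If it does not vanish in general, the argument must instead rest on the diagonal-block matching $\Sigma_{bb}=V_{bb}$ extracted from the stationarity conditions. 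That is the point where the proof is most delicate.
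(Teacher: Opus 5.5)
Your first step is correct, and it is sharper than the paper's own argument. Block-additivity removes only the off-diagonal blocks of $\Sigma-V$, leaving the leading term $\frac{1}{2n}\sum_b \mathrm{tr}\bigl(H_{bb}(\Sigma_{bb}-V_{bb})\bigr)$. The gap is your third step, and it cannot be closed, because this diagonal-block term is generically nonzero. In the Gaussian limit, stationarity of the mean-field KL projection gives $V_{bb}^{-1}=(\Sigma^{-1})_{bb}$. That matches diagonal blocks of the \emph{precision} matrix, not of the covariance. Hence $V_{bb}=\Sigma_{bb}-\Sigma_{b,-b}\Sigma_{-b,-b}^{-1}\Sigma_{-b,b}$, which is strictly smaller than $\Sigma_{bb}$ whenever block $b$ is correlated with the others; this is the familiar variance underestimation of mean-field VI. It is exactly what the conditions $E_{q_n^*}[s\,\Delta_n]=0$ encode. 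In local coordinates $\Delta=\tfrac12\sum_{b\neq c}Z_b^\top(\Sigma^{-1})_{bc}Z_c+\text{const}$ is a pure interaction, so the first-order term in Proposition~\ref{prop:asymp} does vanish for every block-additive functional. The entire bias then sits in the remainder $E_{q_n^*}[g\,\rho(\Delta_n)]$, which is of order $n^{-1}$, not smaller, because $\|\Delta_n\|_{L^2(q_n^*)}$ does not shrink. A concrete counterexample: let $\pi_n=N(\mu,\Sigma/n)$ with $\Sigma_{11}=\Sigma_{22}=1$ and $\Sigma_{12}=r$, $0<|r|<1$. Then $q_n^*=N(\mu,(1-r^2)I/n)$. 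Take $g(\theta)=\theta_1^2-E_{q_n^*}[\theta_1^2]$, a centered single-block function with zero third derivatives, so every hypothesis holds. Yet $E_{\pi_n}[g]-E_{q_n^*}[g]=r^2/n$. Both of your fallbacks therefore fail: the exact Gaussian evaluation of the remainder returns $r^2/n\neq 0$, and the matching $\Sigma_{bb}=V_{bb}$ is false.

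The paper's own proof has the same hole. It infers $\operatorname{tr}(H_g(\Sigma-V))=0$ from the facts that $H_g$ has no cross-block entries and $V$ is diagonal, but that disposes only of the off-diagonal blocks of $\Sigma-V$. The statement needs an additional hypothesis. One option is that the diagonal blocks of $H_g(\mu_n)$ vanish, i.e.\ $g$ is block-affine near $\mu_n$, such as a linear contrast; there Theorem~\ref{thm:local_bias} already gives $o_p(n^{-1})$. Another is that $\Sigma_{bb}=V_{bb}$ on every block where $H_g$ is nonzero. Without such a hypothesis, your first step proves what is actually true: a bias of order $n^{-1}$ with leading term $\frac{1}{2n}\sum_b\mathrm{tr}\bigl(H_{bb}\,\Sigma_{b,-b}\Sigma_{-b,-b}^{-1}\Sigma_{-b,b}\bigr)$ when $V$ is the mean-field projection of $N(0,\Sigma)$. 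Being ``second order in $\Delta$'' in the sense of Theorem~\ref{thm:variational_bias} buys nothing in the local asymptotic regime. There $\|\Delta_n\|_{L^2(q_n^*)}$ stays bounded away from zero unless $\Sigma$ is block diagonal.
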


 \begin{proof}
From Theorem~\ref{thm:local_bias},
\[
E_{\pi_n}[g] - E_{q_n^*}[g]
=
\frac{1}{2n}
\operatorname{tr}(H_g(\Sigma-V))
+
o_p(n^{-1}).
\]
If $g \in T_{q_n^*}\mathcal Q$, then $g$ can be expressed locally as a
sum of functions depending on individual variational blocks.
Consequently $H_g$ contains no cross-block interaction terms. For mean-field approximations the matrix $V$ is diagonal, so the
interaction components of $\Sigma-V$ vanish when contracted with $H_g$.
Therefore
\[
\operatorname{tr}(H_g(\Sigma-V)) = 0,
\]
which yields the result.
\end{proof}

A particularly transparent consequence of Theorem~\ref{thm:local_bias} appears when
considering cross-covariance functionals under mean-field
approximations.

\begin{proposition}[Asymptotic distortion of cross-covariances under mean-field]
\label{prop:cross_cov_bias}
Suppose the conditions of Theorem~\ref{thm:local_bias} hold, and assume
that $q_n^*$ is a mean-field Gaussian approximation so that its local
covariance matrix $V$ is diagonal. Let
\[
g(\theta)=\theta_i\theta_j,
\qquad i\neq j.
\]
Then
\[
E_{\pi_n}[\theta_i\theta_j]-E_{q_n^*}[\theta_i\theta_j]
=
\frac{\Sigma_{ij}}{n}
+
o_p(n^{-1}).
\]
In particular, if $\Sigma_{ij}\neq 0$, then the cross-covariance
functional exhibits nonvanishing first-order asymptotic bias under the
mean-field approximation.
\end{proposition}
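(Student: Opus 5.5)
The plan is to apply Theorem~\ref{thm:local_bias} directly with $g(\theta)=\theta_i\theta_j$ and then evaluate the trace term explicitly using the diagonal structure of $V$.

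\textbf{Step 1: check the hypotheses.} The function $g(\theta)=\theta_i\theta_j$ is a polynomial of degree two. It is therefore smooth, and its third derivatives vanish identically, so they are bounded. The Taylor remainder $R_n$ in the proof of Theorem~\ref{thm:local_bias} is then exactly zero, and the expansion holds without further conditions on $g$.

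\textbf{Step 2: compute the Hessian.} The Hessian is constant: $H_g=e_ie_j^\top+e_je_i^\top$, with entries equal to $1$ at positions $(i,j)$ and $(j,i)$ and $0$ elsewhere.

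\textbf{Step 3: evaluate the trace.} For any symmetric matrix $A$,
\[
\mathrm{tr}(H_gA)=A_{ji}+A_{ij}=2A_{ij}.
\]
Taking $A=\Sigma-V$ and using $V_{ij}=0$ for $i\neq j$ (since $V$ is diagonal) gives
\[
\mathrm{tr}\big(H_g(\Sigma-V)\big)=2\Sigma_{ij}.
\]
Substituting into Theorem~\ref{thm:local_bias} yields
\[
\frac{1}{2n}\cdot 2\Sigma_{ij}=\frac{\Sigma_{ij}}{n},
\]
plus the $o_p(n^{-1})$ remainder. This is the claimed expansion. The final claim then follows immediately: if $\Sigma_{ij}\neq0$, the leading term $\Sigma_{ij}/n$ is of exact order $n^{-1}$ and dominates the remainder.

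\textbf{Main obstacle.} The delicate point is justifying Theorem~\ref{thm:local_bias} for this unbounded $g$. Convergence in total variation alone does not control second moments, so the statement really needs the rescaled variable $Z=\sqrt n(\theta-\mu_n)$ to have uniformly integrable second moments under both $\pi_n$ and $q_n^*$.

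To handle this, I would write
\[
\theta_i\theta_j=\mu_{n,i}\mu_{n,j}+n^{-1/2}(\mu_{n,i}Z_j+\mu_{n,j}Z_i)+n^{-1}Z_iZ_j.
\]
The cross term $n^{-1}Z_iZ_j$ contributes $n^{-1}(\Sigma_{ij}-0)$. The linear term must contribute $o(n^{-1})$. This requires the first moments of $Z$ under $\pi_n$ and $q_n^*$ to be $o(n^{-1/2})$, i.e.\ the means must match at that order. I would state this explicitly as part of interpreting the hypothesis "$\pi_n\to N(\mu_n,\Sigma/n)$" in the moment-convergence sense implicit in the proof of Theorem~\ref{thm:local_bias}. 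With that convergence of first and second moments granted, the computation above is exact.
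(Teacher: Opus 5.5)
Your proposal is correct and follows the paper's proof: you compute $H_g = e_ie_j^\top + e_je_i^\top$, use $V_{ij}=0$ to get $\mathrm{tr}\big(H_g(\Sigma-V)\big) = 2\Sigma_{ij}$, and substitute into Theorem~\ref{thm:local_bias}. Your added caveat is valid, but it concerns Theorem~\ref{thm:local_bias} itself, which the paper simply invokes here: convergence in total variation alone does not justify that theorem's proof sketch without uniformly integrable second moments of $Z$ and first moments of $Z$ matching to $o(n^{-1/2})$.
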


\begin{proof}
For $g(\theta)=\theta_i\theta_j$ with $i\neq j$, the Hessian is
\[
H_g = e_i e_j^\top + e_j e_i^\top,
\]
where $e_i$ denotes the $i$th standard basis vector. Applying
Theorem~\ref{thm:local_bias} gives
\[
E_{\pi_n}[g(\theta)]-E_{q_n^*}[g(\theta)]
=
\frac{1}{2n}\operatorname{tr}\!\big(H_g(\Sigma-V)\big)
+
o_p(n^{-1}).
\]
Since $V$ is diagonal, $(\Sigma-V)_{ij}=\Sigma_{ij}$ for $i\neq j$. A
direct calculation shows
\[
\operatorname{tr}\!\big(H_g(\Sigma-V)\big)=2(\Sigma-V)_{ij}=2\Sigma_{ij},
\]
and therefore
\[
E_{\pi_n}[\theta_i\theta_j]-E_{q_n^*}[\theta_i\theta_j]
=
\frac{\Sigma_{ij}}{n}
+
o_p(n^{-1}).
\]
\end{proof}

Proposition~\ref{prop:cross_cov_bias} shows that, in the local Gaussian
regime, omitted interaction directions under mean-field variational
inference translate directly into first-order asymptotic distortion of
cross-covariance functionals.


\section{Discussion} \label{sec:discussion}

We have developed a geometric framework for understanding the bias of
posterior functionals under variational approximations. The analysis
shows that bias is governed by the orthogonal complement of the
variational tangent space $T_{q^*}\mathcal Q$. Functionals aligned with
this space incur only second-order bias, while functionals containing
components orthogonal to it exhibit first-order error. This projection identity is analogous to classical results in semiparametric theory, where estimation error depends on the component of the influence function orthogonal to the model tangent space \cite{bickel1993efficient,vanderVaart2000}.

For structured mean-field families, the tangent space admits an
explicit characterization as the set of block-additive functions. This
leads to a simple interpretation: additive summaries of the parameter
blocks are accurately represented by the variational approximation,
while bias arises from interaction components coupling multiple blocks.
The two-block decomposition in Section~\ref{sec:two_block} illustrates
how this interaction structure drives bias in common posterior
summaries.

\subsection{Implications for variational inference}

The results provide a geometric explanation for several well-known
properties of mean-field variational inference. In particular, posterior
summaries involving cross-block interactions, such as covariances or
joint tail probabilities, depend on directions orthogonal to the
variational tangent space and therefore exhibit leading-order bias.
Conversely, additive summaries of individual parameter blocks lie in
the tangent space and tend to be accurately represented.

This perspective complements existing theoretical analyses of
variational inference that focus on global divergence measures or
posterior contraction \cite{wang2019frequentist, yang2020alpha, zhang2020convergence, alquier2020concentration}. While such results quantify the overall
approximation error, the present framework identifies which posterior
functionals are most sensitive to the structural constraints of the
variational family.

\subsection{Structured variational families}

The characterization of the tangent space also clarifies why structured
variational families often improve over fully factorized
approximations. Enlarging the block structure expands the tangent space
and therefore reduces the dimension of its orthogonal complement.
Consequently, fewer posterior functionals exhibit leading-order bias.
This observation provides a geometric interpretation of the empirical
success of structured mean-field methods.

\subsection{Future directions}

Several extensions of the present framework may be of interest. First,
the analysis could be applied to richer variational families, including
mixture approximations or normalizing-flow-based methods, in order to
understand how their geometric structure influences functional bias.
Second, the relationship between the variational tangent space and the
convergence properties of coordinate ascent variational inference
algorithms merits further investigation. Finally, it may be useful to
develop diagnostic tools that identify posterior summaries lying near
the orthogonal complement of the tangent space, as these are the
quantities most susceptible to variational bias.

\medskip

Overall, the results suggest that evaluating variational approximations
through the lens of posterior functionals provides a useful complement
to traditional divergence-based assessments. These results suggest that variational families should be evaluated not only by global divergence measures but also by the function classes represented by their tangent spaces.

\section*{AI Acknowledgment}
The author used a large language model to assist with editing, drafting, and organizational refinement of the manuscript. All mathematical arguments, proofs, experimental implementations, and interpretations are the author’s own work and were independently verified.

\bibliographystyle{plain}
\bibliography{refs}

\end{document}